\tiny\color{gray},
\theoremstyle{plain}
\newtheorem{theorem}{Theorem}[section]
\newtheorem{proposition}[theorem]{Proposition}
\theoremstyle{definition}
\newtheorem{definition}[theorem]{Definition}
\theoremstyle{remark}
\newtheorem{remark}[theorem]{Remark}
\numberwithin{equation}{section}
\begin{document}
\title[Hamilton--Jacobi--Bellman Equations in Bounded Settings]{Radial and
Non-Radial Solution Structures for Quasilinear Hamilton--Jacobi--Bellman
Equations in Bounded Settings}
\author{Dragos-Patru Covei}
\address{Department of Applied Mathematics, The Bucharest University of
Economic Studies, Piata Romana, 1st district, Postal Code: 010374, Postal
Office: 22, Romania}
\email{dragos.covei@csie.ase.ro}
\thanks{The final version of the article is published in journal (with
volume, page numbers and DOI, when available).}
\subjclass[2020]{Primary 35J60; Secondary 35J70, 35B40, 49L25, 65N12}
\keywords{Hamilton--Jacobi--Bellman equations, quasilinear elliptic PDEs,
nonradial solutions, monotone iteration, production planning, image
restoration}
\date{\today }

\begin{abstract}
This paper establishes the existence, uniqueness, and global $C^{1,\beta}$
regularity of positive classical solutions to a class of quasilinear
Hamilton--Jacobi--Bellman (HJB) equations with Dirichlet boundary conditions
on bounded convex domains. The core technical contribution is a constructive
existence proof based on a weighted linear monotone iteration scheme. This
scheme's stability and convergence are rigorously established through the
construction of adaptive sub- and super-solutions leveraging the torsion
function of the domain. Additionally, we provide a complete probabilistic
derivation of the quasilinear PDE from the framework of controlled It\^{o}
diffusions, formally bridging the gap between stochastic optimal control
theory and elliptic regularity analysis. Our results extend beyond the
classical quadratic cost regime to the wider class of sub-quadratic growth
source terms. Finally, we demonstrate the utility of this theoretical
framework through high-precision numerical implementations in two distinct
fields: stochastic production planning and nonlinear contrast enhancement in
image restoration.
\end{abstract}

\maketitle

%----------------------------------------------------------------------------------------
%	INTRODUCTION
%----------------------------------------------------------------------------------------

\section{Introduction}

Nonlinear partial differential equations and optimal control techniques have
played a central role in modern image processing and applied mathematics.
Classical diffusion-based models such as the anisotropic diffusion of Perona
and Malik \cite{PERONA1990} and the total variation framework of Rudin,
Osher, and Fatemi \cite{ROF1992} established the foundation for PDE-driven
image enhancement and restoration. In parallel, the mathematical theory of
Hamilton--Jacobi--Bellman (HJB) equations has been extensively developed in
the context of stochastic control, with fundamental contributions by Lions 
\cite{LIONS1983}, Lasry and Lions \cite{LASRY1989}, Barles and Perthame \cite%
{BARLES}, and the viscosity solution framework of Crandall, Ishii, and Lions 
\cite{CRANDALL}. The classical monographs of Gilbarg and Trudinger \cite%
{GILBARG} and Fleming and Soner \cite{FLEMING} provide comprehensive
treatments of elliptic regularity and controlled diffusion processes,
respectively.

In this paper we investigate the existence, uniqueness, and regularity of
solutions to the quasilinear Hamilton--Jacobi--Bellman equation 
\begin{equation}
-\frac{\sigma ^{2}}{2}\,\Delta V(y)+C_{\alpha }\,|\nabla
V(y)|^{p}-h(y)=0\quad \text{in }\Omega ,\qquad V=g\ \text{on }\partial
\Omega ,  \label{hjb}
\end{equation}%
where $\Omega \subset \mathbb{R}^{N}$ ($N\geq 1$) is a bounded $C^{2}$
convex domain with smooth boundary $\partial \Omega $, $\sigma >0$ is the
diffusion coefficient, $\alpha \in (1,2]$ is the cost exponent, and $g\geq 0$
is a constant boundary value. The parameters 
\begin{equation*}
p:=\frac{\alpha }{\alpha -1}\in \lbrack 2,\infty ),\qquad C_{\alpha }:=\frac{%
\alpha -1}{\alpha ^{\frac{\alpha }{\alpha -1}}}>0,
\end{equation*}%
are linked through the conjugacy relation $1/\alpha +1/p=1$, and $C_{\alpha
} $ arises naturally from the Legendre transform of the control cost
functional.

The equation \eqref{hjb} was first introduced by Lasry and Lions \cite%
{LASRY1989} in the pioneering work on stochastic control with state
constraints, where an additional discount factor is incorporated and the
resulting solutions typically exhibit blow-up at the boundary. In
contrast, our focus is on the Dirichlet problem on bounded convex domains,
where the behavior of the solution and its gradient near the boundary is
governed by the interplay between the diffusion coefficient $\sigma $ and
the nonlinearity $C_{\alpha }|\nabla V|^{p}$.

More recently, this framework has found significant applications in
production planning \cite{CANEPA2022,COVEI2025A} and variational image
restoration \cite{COVEI2025M}. The analysis is carried out on bounded
domains when $\alpha =2$ \cite{COVEI2025A} and in radially symmetric
settings for all $\alpha \in \left( 0,2\right] $ \cite{COVEI2025M}.

In particular, the regime $\alpha \in (1,2)$, which corresponds to a
supra-quadratic gradient penalty $p>2$, generates a rich geometric structure
that is highly effective for contrast enhancement.

This work's central contribution is its unified treatment of broader problem
settings than those addressed so far, supported by a mathematically rigorous
and algorithmically constructive self-contained existence theory. By
establishing the $C^{1,\beta }(\overline{\Omega })$ regularity for the case
of sub-quadratic forcing $h$, we provide a missing link in the literature
between the theoretical existence results of the 1980s and the
high-performance numerical solvers required for modern industrial
applications.

\subsection*{Significance and Novelty}

Unlike standard treatments of the HJB equation that rely on the vanishing
viscosity method or purely probabilistic arguments, our approach utilizes a 
\emph{weighted monotone linearization}. This method not only proves
existence but also yields a numerically stable iteration scheme that
preserves the physical properties of the value function (such as positivity
and concavity) at each step. This work establishes that for $\alpha \in
(1,2] $, the geometric structure of the HJB solution in convex domains is
robust enough to handle the supra-quadratic gradient penalty $p > 2$, which
is a regime classically avoided in simpler elliptic contexts.

The main results of this paper are established in the following two
theorems, which characterize the structure of solutions to quasilinear
Hamilton--Jacobi--Bellman equations in bounded settings.

\begin{theorem}[Non-radial solutions]
\label{main} Let $\Omega \subset \mathbb{R}^{N}$ be a bounded $C^{2}$ convex
domain. Suppose $h:\overline{\Omega }\rightarrow \lbrack 0,\infty )$ is a
continuous function such that $\lim_{\left\vert x\right\vert \rightarrow
\infty }\left( h\left( x\right) /\left\vert x\right\vert ^{p}\right) \in
\left( 0,+\infty \right) $ and let $g\in \lbrack 0,\infty )$ denote a
constant boundary datum. Then, there exists a unique positive classical
solution $V\in C^{2}(\Omega )\cap C^{1,\beta }(\overline{\Omega })$ to the
Dirichlet problem~\eqref{hjb} for some $\beta \in (0,1)$. Moreover, if $g>0$
on $\partial \Omega $ and $h>0$ in $\Omega $, then $V>0$ in $\overline{%
\Omega }$.
\end{theorem}

\begin{theorem}[Radial solutions]
\label{thm:radial} Suppose%
\begin{equation*}
\Omega =B_{R}(0):=\{x\in \mathbb{R}^{N}:|x|<R\}
\end{equation*}%
is a ball, $h(x)=\tilde{h}(|x|)$ is radial such that $\lim_{\left\vert
x\right\vert \rightarrow \infty }\left( \tilde{h}(|x|)/\left\vert
x\right\vert ^{p}\right) \in \left( 0,+\infty \right) $, and $g\geq 0$ is
constant on $\partial \Omega $. Then the unique solution $V$ of \eqref{hjb}
is radial: $V(x)=u(|x|)$ for some function $u:[0,R]\rightarrow \mathbb{R}$
satisfying the ordinary differential equation 
\begin{equation}
-\frac{\sigma ^{2}}{2}\Big(u^{\prime \prime }(r)+\frac{N-1}{r}u^{\prime }(r)%
\Big)+C_{\alpha }\,\left\vert u^{\prime }(r)\right\vert ^{p}+\tilde{h}%
(r)=0,\quad 0<r=|x|<R,  \label{radial-ode-intro}
\end{equation}%
with boundary conditions $u^{\prime }(0)=0$ and $u(R)=g$.
\end{theorem}

The structure of the work is as follows. In Section \ref{2}, we provide the
complete proof of Theorem \ref{main} for the non-radial case, structured in
clear logical steps. Section \ref{3} is devoted to the proof of Theorem \ref%
{thm:radial} for the radial case, establishing both the symmetry and the
governing ODE. Section \ref{4} recovers the stochastic model of origin for
the HJB equation, providing its probabilistic derivation. In Section \ref{5}%
, we provide a detailed numerical analysis, including the iteration scheme's
convergence properties and applications to production planning and image
enhancement. Finally, Section \ref{6} concludes the paper with a discussion
on future research directions and the broader implications of these results
for non-quadratic control problems.

\section{Proof of Theorem \protect\ref{main} (the non-radial case)\label{2}}

The proof of Theorem \ref{main} is organized into four major logical steps,
ensuring a transparent derivation of existence, uniqueness, and regularity.

\textbf{Step 1: Construction of ordered positive sub- and super-solutions.}
We construct a positive sub-solution $V_{-}$ and a positive super-solution $%
V_{+}$ such that%
\begin{equation*}
V_{-}\leq V_{+}\ \text{in}\ \Omega \text{ and }V_{-}\leq g\leq V_{+}\ \text{%
on}\ \partial \Omega .
\end{equation*}%
Let $\phi \in C^{\infty }(\Omega )\cap C(\overline{\Omega })$ be the unique
solution to the Poisson (torsion) problem 
\begin{equation*}
\left\{ 
\begin{array}{lll}
-\Delta \phi =1 & \text{in} & \Omega \\ 
\phi =0 & \text{on} & \partial \Omega .%
\end{array}%
\right.
\end{equation*}%
By the strong maximum principle, $\phi >0$ in $\Omega $.

For the sub-solution, let 
\begin{equation*}
V_{-}(y):=g\text{ for }y\in \overline{\Omega }.
\end{equation*}%
Then 
\begin{equation*}
\nabla V_{-}\equiv 0\text{ and }\Delta V_{-}\equiv 0,
\end{equation*}%
which yields 
\begin{equation*}
-\frac{\sigma ^{2}}{2}\Delta V_{-}+C_{\alpha }|\nabla
V_{-}|^{p}-h(y)=-h(y)\leq 0,
\end{equation*}%
confirming $V_{-}$ is a sub-solution. For the super-solution, we define 
\begin{equation*}
V_{+}(y):=g+B\phi (y)
\end{equation*}%
for some $B>0$. Substituting into the operator gives 
\begin{equation*}
-\frac{\sigma ^{2}}{2}\Delta V_{+}+C_{\alpha }|\nabla V_{+}|^{p}-h(y)\geq 
\frac{\sigma ^{2}}{2}B-h(y).
\end{equation*}%
Choosing $B\geq 2H/\sigma ^{2}$ where $H=\max_{\overline{\Omega }}h$ ensures 
$V_{+}$ is a super-solution. Note that $V_{-}\leq V_{+}$ in $\overline{%
\Omega }$ since $\phi >0$.

\textbf{Step 2: Comparison principle.} The comparison principle is
established as follows. Let $u,v\in C^{2}(\Omega )\cap C(\overline{\Omega })$
be a sub-solution and a super-solution, respectively, with $u\leq v$ on $%
\partial \Omega $. We prove $u\leq v$ in $\Omega $ by contradiction. Suppose
that $\sup_{\Omega }(u-v)>0$. Since $u\leq v$ on $\partial \Omega $, the
maximum of $u-v$ must be attained at an interior point $y_{0}\in \Omega $,
where 
\begin{equation*}
(u-v)(y_{0})=\max_{\overline{\Omega }}(u-v)=:\delta >0,\text{ }\nabla
u(y_{0})=\nabla v(y_{0}),
\end{equation*}%
and 
\begin{equation*}
\Delta (u-v)(y_{0})\leq 0.
\end{equation*}
Evaluating the inequalities for $u$ and $v$ at $y_{0}$ and subtracting
yields 
\begin{equation*}
-\frac{\sigma ^{2}}{2}\big(\Delta u-\Delta v\big)(y_{0})\;+\;C_{\alpha }\Big(%
|\nabla u(y_{0})|^{p}-|\nabla v(y_{0})|^{p}\Big)\;\leq \;0.
\end{equation*}%
The gradient term vanishes, and the Laplacian term is non-negative,
resulting in $0\leq 0$, which is not a contradiction.

To obtain a strict inequality, we employ a perturbation argument. Let $\phi $
be the torsion function. For $\varepsilon >0$, define the perturbed
subsolution $u_{\varepsilon }:=u-\varepsilon \phi $. Then 
\begin{equation*}
u_{\varepsilon }\leq u\text{ in }\Omega \text{ and }u_{\varepsilon }=u\text{
on }\partial \Omega ,
\end{equation*}
so $u_{\varepsilon }\leq v$ on $\partial \Omega $. We compute 
\begin{eqnarray*}
&&-\frac{\sigma ^{2}}{2}\Delta u_{\varepsilon }+C_{\alpha }|\nabla
u_{\varepsilon }|^{p}-h \\
&=&\Big(-\frac{\sigma ^{2}}{2}\Delta u+C_{\alpha }|\nabla u|^{p}-h\Big)+%
\frac{\sigma ^{2}}{2}\varepsilon +C_{\alpha }\big(|\nabla u-\varepsilon
\nabla \phi |^{p}-|\nabla u|^{p}\big).
\end{eqnarray*}%
Using the convexity of $\xi \mapsto |\xi |^{p}$, we have%
\begin{equation*}
|\,|\nabla u-\varepsilon \nabla \phi |^{p}-|\nabla u|^{p}\,|\leq
C_{p}\varepsilon
\end{equation*}%
for some constant $C_{p}$. Thus,%
\begin{equation*}
-\frac{\sigma ^{2}}{2}\Delta u_{\varepsilon }+C_{\alpha }|\nabla
u_{\varepsilon }|^{p}-h\;\leq \;\frac{\sigma ^{2}}{2}\varepsilon -C_{\alpha
}C_{p}\varepsilon .
\end{equation*}%
Choosing $\varepsilon >0$ sufficiently small such that $\sigma
^{2}/(2C_{\alpha }C_{p})<1$, we get 
\begin{equation*}
-\frac{\sigma ^{2}}{2}\Delta u_{\varepsilon }+C_{\alpha }|\nabla
u_{\varepsilon }|^{p}-h\;\leq \;-\eta \varepsilon
\end{equation*}%
for some $\eta >0$. Thus $u_{\varepsilon }$ is a \emph{strict} subsolution.
If $\sup_{\Omega }(u_{\varepsilon }-v)>0$, the maximum is attained at an
interior point $y_{\varepsilon }\in \Omega $, where 
\begin{equation*}
\nabla u_{\varepsilon }(y_{\varepsilon })=\nabla v(y_{\varepsilon })\text{
and }\Delta (u_{\varepsilon }-v)(y_{\varepsilon })\leq 0.
\end{equation*}%
Subtracting inequalities at $y_{\varepsilon }$ yields $0\leq -\eta
\varepsilon <0$, a contradiction. Therefore $u_{\varepsilon }-v\leq 0$ in $%
\Omega $. Letting $\varepsilon \rightarrow 0^{+}$ and using continuity gives 
$u\leq v$ in $\Omega $.

\textbf{Step 3: Existence via Weighted Linear Monotone Iteration.} We
construct the solution using a weighted linearization. Let 
\begin{equation*}
M:=\max \left\{ \Vert \nabla V_{-}\Vert _{L^{\infty }(\Omega )},\Vert \nabla
V_{+}\Vert _{L^{\infty }(\Omega )}\right\} ,
\end{equation*}%
and $C_{\Omega }$ be a geometric constant associated with the domain $\Omega 
$ (related to the the torsion function). We choose the constant weight 
\begin{equation*}
\Lambda _{0}:=C_{\alpha }\,p\,M^{p-1}C_{\Omega }.
\end{equation*}%
The choice of $\Lambda _{0}$ is critical for establishing the monotonicity
of the sequence. Specifically, for any $u,v\in \lbrack V_{-},V_{+}]$ such
that $u\leq v$, we consider the difference 
\begin{equation*}
T(v)-T(u)=\Lambda _{0}(v-u)-C_{\alpha }(|\nabla v|^{p}-|\nabla u|^{p}).
\end{equation*}
Applying the Mean Value Theorem to the function $f(\xi )=|\xi |^{p}$, there
exists a vector $\vec{\zeta}$ on the segment connecting $\nabla u$ and $%
\nabla v$ such that 
\begin{equation*}
|\nabla v|^{p}-|\nabla u|^{p}=p|\vec{\zeta}|^{p-2}\vec{\zeta}\cdot \nabla
(v-u).
\end{equation*}
Since $|\nabla u|,|\nabla v|\leq M$, it follows that $|\vec{\zeta}|\leq M$.
Consequently, we have the estimate 
\begin{equation*}
|C_{\alpha }(|\nabla v|^{p}-|\nabla u|^{p})|\leq C_{\alpha }pM^{p-1}|\nabla
(v-u)|.
\end{equation*}%
By the properties of the domain $\Omega $, the term $\Lambda _{0}(v-u)$ acts
as a stabilizing weight that dominates the gradient fluctuations. This
ensures that the map 
\begin{equation*}
T(u):=h(x)+\Lambda _{0}u-C_{\alpha }|\nabla u|^{p}
\end{equation*}%
is monotone nondecreasing in $u$ in the sense that the iteration %
\eqref{eq:weighted-iteration-step} preserves the ordering of the iterates.
We define the sequence $\{V^{(k)}\}_{k\geq 0}$ by $V^{(0)}:=V_{+}$, and for $%
k\geq 0$, let $V^{(k+1)}$ be the unique classical solution of 
\begin{equation}
\begin{cases}
-\dfrac{\sigma ^{2}}{2}\Delta V^{(k+1)}+\Lambda _{0}V^{(k+1)}=h(x)+\Lambda
_{0}V^{(k)}-C_{\alpha }|\nabla V^{(k)}|^{p}, & x\in \Omega , \\ 
V^{(k+1)}=g, & x\in \partial \Omega .%
\end{cases}
\label{eq:weighted-iteration-step}
\end{equation}%
By construction, all iterates are trapped between the barriers: 
\begin{equation*}
V_{-}\leq V^{(k)}\leq V_{+}\text{ in }\Omega .
\end{equation*}%
The sequence $\{V^{(k)}\}$ is monotone decreasing: 
\begin{equation*}
V^{(k+1)}\leq V^{(k)}\text{ in }\Omega .
\end{equation*}%
The monotonicity is established using a detailed inductive argument.

\textbf{Base case ($k=0$).} By definition, $V^{(0)}=V_{+}$ is a
supersolution of \eqref{hjb}, satisfying 
\begin{equation*}
-\frac{\sigma ^{2}}{2}\Delta V^{(0)}+C_{\alpha }|\nabla
V^{(0)}|^{p}-h(x)\geq 0.
\end{equation*}%
The first iterate $V^{(1)}$ satisfies 
\begin{equation*}
-\frac{\sigma ^{2}}{2}\Delta V^{(1)}+\Lambda _{0}V^{(1)}=h(x)+\Lambda
_{0}V^{(0)}-C_{\alpha }|\nabla V^{(0)}|^{p}.
\end{equation*}%
Rearranging the supersolution inequality as 
\begin{equation*}
h(x)-C_{\alpha }|\nabla V^{(0)}|^{p}\leq -\frac{\sigma ^{2}}{2}\Delta
V^{(0)},
\end{equation*}%
we substitute this into the equation for $V^{(1)}$ to obtain: 
\begin{equation*}
-\frac{\sigma ^{2}}{2}\Delta V^{(1)}+\Lambda _{0}V^{(1)}\leq -\frac{\sigma
^{2}}{2}\Delta V^{(0)}+\Lambda _{0}V^{(0)}.
\end{equation*}%
Defining the difference $w^{(1)}:=V^{(1)}-V^{(0)}$, we see that $w^{(1)}$
satisfies%
\begin{equation*}
-\frac{\sigma ^{2}}{2}\Delta w^{(1)}+\Lambda _{0}w^{(1)}\leq 0\text{ in }%
\Omega ,
\end{equation*}%
and 
\begin{equation*}
w^{(1)}=0\text{ on }\partial \Omega .
\end{equation*}
By the weak maximum principle for elliptic operators, $w^{(1)}\leq 0$ in $%
\Omega $, which implies $V^{(1)}\leq V^{(0)}$.

\textbf{Inductive step.} Assume that $V^{(k)}\leq V^{(k-1)}$ for some $k\geq
1$. We consider the equations for $V^{(k+1)}$ and $V^{(k)}$: 
\begin{align*}
-\frac{\sigma ^{2}}{2}\Delta V^{(k+1)}+\Lambda _{0}V^{(k+1)}& =T(V^{(k)}), \\
-\frac{\sigma ^{2}}{2}\Delta V^{(k)}+\Lambda _{0}V^{(k)}& =T(V^{(k-1)}).
\end{align*}%
Subtracting these equations yields: 
\begin{equation*}
-\frac{\sigma ^{2}}{2}\Delta (V^{(k+1)}-V^{(k)})+\Lambda
_{0}(V^{(k+1)}-V^{(k)})=T(V^{(k)})-T(V^{(k-1)}).
\end{equation*}%
Since it was established that the operator $T$ is monotone nondecreasing on
the interval $[V_{-},V_{+}]$, and by the inductive hypothesis $V^{(k)}\leq
V^{(k-1)}$, it follows that $T(V^{(k)})\leq T(V^{(k-1)})$. Thus, the
difference%
\begin{equation*}
w^{(k+1)}:=V^{(k+1)}-V^{(k)}
\end{equation*}%
satisfies: 
\begin{equation*}
-\frac{\sigma ^{2}}{2}\Delta w^{(k+1)}+\Lambda
_{0}w^{(k+1)}=T(V^{(k)})-T(V^{(k-1)})\leq 0\quad \text{in }\Omega ,
\end{equation*}%
with boundary condition $w^{(k+1)}=0$ on $\partial \Omega $. Applying the
maximum principle once more, we conclude that $w^{(k+1)}\leq 0$, hence $%
V^{(k+1)}\leq V^{(k)}$ for all $k$.

This monotone decreasing sequence, being bounded below by $V_{-}$, is
guaranteed to converge.

\textbf{Uniform regularity.} Is established by observing that the gradients $%
\nabla V^{(k)}$ are uniformly bounded in $L^{\infty }(\Omega )$ by a
constant $K$ depending only on the $W^{1,\infty }$ norms of the barriers $%
V_{\pm }$. The source terms 
\begin{equation*}
f^{(k)}(x):=h(x)+\Lambda _{0}V^{(k)}-C_{\alpha }|\nabla V^{(k)}|^{p}
\end{equation*}
are thus uniformly bounded in $L^{\infty }(\Omega )$. Applying standard
elliptic $W^{2,q}(\Omega )$ estimates for $q>N$ and the Morrey embedding 
\begin{equation*}
W^{2,q}(\Omega )\hookrightarrow C^{1,\beta }(\overline{\Omega })\text{ for }%
\beta =1-N/q\in (0,1),
\end{equation*}
we conclude that the sequence $\{V^{(k)}\}$ is uniformly bounded in $%
C^{1,\beta }(\overline{\Omega })$. By the Arzel\`{a}--Ascoli theorem, $%
\{V^{(k)}\}$ is relatively compact in $C^{1}(\overline{\Omega })$. Since the
sequence is monotone decreasing and bounded below by $V_{-}$, it converges
pointwise to a unique limit $V(x)$. The compactness ensures that $%
V^{(k)}\rightarrow V$ uniformly in $C^{1}(\overline{\Omega })$, which
implies $\nabla V^{(k)}\rightarrow \nabla V$ uniformly in $\overline{\Omega }
$. Schauder estimates then imply that $V\in C^{2,\beta }(\Omega )\cap
C^{1,\beta }(\overline{\Omega })$. Passing to the limit in %
\eqref{eq:weighted-iteration-step}, we verify that $V$ is the unique
classical solution to the HJB equation.

\textbf{Step 4: Uniqueness.} Suppose $V_1$ and $V_2$ are two solutions in $%
C^{2}(\Omega )\cap C(\overline{\Omega})$ to the HJB equation with the same
boundary data. Then $V_1$ and $V_2$ are both subsolutions and
supersolutions. Applying the comparison principle from Step 2 with $u=V_{1}$
and $v=V_{2}$ yields $V_{1}\leq V_{2}$ in $\Omega$. Reversing the roles
(taking $u = V_2$ and $v = V_1$) gives $V_{2}\leq V_{1} $ in $\Omega$.
Therefore $V_{1}\equiv V_{2}$ in $\Omega$, and by continuity, also on $%
\overline{\Omega}$. This completes the proof of Theorem \ref{main}. \qed

\begin{remark}[Algorithmic implementation]
The weighted monotone iteration developed in Step~3 leads directly to an
implementable numerical scheme for approximating the solution of \eqref{hjb}%
. The procedure is as follows:

\begin{enumerate}
\item \textbf{Preprocessing: construction of the torsion function.} Compute
the torsion function $\phi$ by solving 
\begin{equation*}
-\Delta \phi = 1 \quad \text{in }\Omega, \qquad \phi = 0 \quad \text{on }%
\partial\Omega.
\end{equation*}
This provides an explicit supersolution of the form $V_{+}=g+B\phi$.

\item \textbf{Initialization.} Choose $B \ge 2H/\sigma^{2}$, where $H=\max_{%
\overline{\Omega}} h$, and set 
\begin{equation*}
V^{(0)} := V_{+} = g + B\phi.
\end{equation*}

\item \textbf{Weighted linear iteration.} Using the explicit constant 
\begin{equation*}
\Lambda _{0}=C_{\alpha }\,p\,M^{p-1}C_{\Omega },\qquad M=\max \{\Vert \nabla
V_{-}\Vert _{L^{\infty }},\Vert \nabla V_{+}\Vert _{L^{\infty }}\},
\end{equation*}%
compute $V^{(k+1)}$ as the solution of 
\begin{equation*}
\left\{ 
\begin{array}{lll}
-\frac{\sigma ^{2}}{2}\Delta V^{(k+1)}+\Lambda _{0}V^{(k+1)}=h(x)+\Lambda
_{0}V^{(k)}-C_{\alpha }|\nabla V^{(k)}|^{p} & \text{in} & \text{ }\Omega ,
\\ 
V^{(k+1)}=g\text{ } & \text{on} & \partial \Omega .%
\end{array}%
\right.
\end{equation*}

\item \textbf{Stopping criterion.} Terminate the iteration when 
\begin{equation*}
\Vert V^{(k+1)}-V^{(k)}\Vert _{L^{\infty }(\Omega )}<\varepsilon
\end{equation*}%
for a prescribed tolerance $\varepsilon >0$.
\end{enumerate}

The monotonicity of the weighted iteration ensures that the sequence $%
\{V^{(k)}\}$ is decreasing and remains trapped between the ordered barriers $%
V_{-}$ and $V_{+}$. Consequently, the iterates converge uniformly to the
unique classical solution of \eqref{hjb}.
\end{remark}

\section{Proof of Theorem \protect\ref{thm:radial} (the radial case)}

\label{3}

In this section, we establish the existence and structure of the radial
solution through three steps, emphasizing the inheritance of symmetry from
the domain and data.

\textbf{Step 1: Invariance under orthogonal transformations.} Suppose $%
\Omega =B_{R}(0)$ is a ball and $h$ is radial ($h(x)=\tilde{h}(|x|)$). For
any orthogonal matrix $Q\in O(N)$, define $V_{Q}(x):=V(Qx)$. Using the chain
rule and the properties of orthogonal matrices, we compute 
\begin{equation*}
\nabla V_{Q}(x)=Q^{T}\nabla V(Qx)\text{ and }\Delta V_{Q}(x)=\Delta V(Qx).
\end{equation*}%
Substituting these into the HJB equation yields 
\begin{equation*}
-\frac{\sigma ^{2}}{2}\Delta V(Qx)+C_{\alpha }|Q^{T}\nabla
V(Qx)|^{p}-h(Qx)=0.
\end{equation*}%
Since 
\begin{equation*}
|Q^{T}\xi |=|\xi |\text{ and }h(Qx)=h(x),
\end{equation*}%
it follows that $V_{Q}$ is also a solution to \eqref{hjb} with boundary
condition $V_{Q}(x)=V(Qx)=g$ on $|x|=R$.

\textbf{Step 2: Uniqueness implies radial symmetry.} By the uniqueness
result established in Section \ref{2}, the solution to \eqref{hjb} must be
unique. Since both $V$ and $V_{Q}$ solve the same Dirichlet problem, we must
have $V_{Q}\equiv V$ for all $Q\in O(N)$. This invariance under all
orthogonal transformations implies that $V$ is radial, meaning $V(x)=u(|x|)$
for some function $u:[0,R]\rightarrow \mathbb{R}$.

\textbf{Step 3: Derivation of the radial ODE.} Let $r=|x|$. For a radial
function $V(x)=u(r)$, the gradient is 
\begin{equation*}
\nabla V(x)=u^{\prime }(r)\frac{x}{r}
\end{equation*}
and the Laplacian is 
\begin{equation*}
\Delta V(x)=u^{\prime \prime }(r)+\frac{N-1}{r}u^{\prime }(r).
\end{equation*}
Substituting these into the quasilinear HJB equation gives the following
ordinary differential equation for $u(r)$: 
\begin{equation}
-\frac{\sigma ^{2}}{2}\Big(u^{\prime \prime }(r)+\frac{N-1}{r}u^{\prime }(r)%
\Big)+C_{\alpha }\,\left\vert u^{\prime }(r)\right\vert ^{p}+\tilde{h}%
(r)=0,\quad 0<r<R.  \label{radial-ode}
\end{equation}%
The boundary conditions are $u^{\prime }(0)=0$, ensuring regularity at the
origin, and $u(R)=g$, satisfying the Dirichlet condition. This proves
Theorem \ref{thm:radial}.

\section{The Stochastic Model Behind the Equation \label{4}}

We now provide a rigorous derivation of the quasilinear elliptic
Hamilton--Jacobi--Bellman equation from stochastic optimal control theory.
The approach follows the seminal work of Lasry and Lions \cite{LASRY1989},
with complete details on the dynamic programming principle and the
verification theorem.

The goal is to show that the PDE 
\begin{equation}
-\frac{\sigma ^{2}}{2}\Delta V(x)\;+\;C_{\alpha}\, |\nabla V(x)|^{p}-h(x)=0,
\quad x\in \Omega,\qquad V=g\ \text{on }\partial \Omega,
\label{eq:PDE-final-alt}
\end{equation}
arises as the Hamilton--Jacobi--Bellman equation for an optimal control
problem with exit-time costs. Recall that $p = \alpha/(\alpha-1)$ and $%
C_\alpha = (\alpha-1)/\alpha^p$ for $\alpha\in(1,2]$.

\subsection{Probability space, state and control}

Let $(\Omega_{\text{prob}},\mathcal{F},\{\mathcal{F}_t\}_{t\ge 0},\mathbb{P}%
) $ be a complete filtered probability space satisfying the usual
conditions, supporting an $N$-dimensional standard Brownian motion $W =
(W_t)_{t\geq 0}$. Here $\Omega_{\text{prob}}$ denotes the sample space (not
to be confused with the spatial domain $\Omega \subset \mathbb{R}^N$).

\begin{definition}[Admissible controls]
An admissible control is a progressively measurable process $v =
(v_t)_{t\geq 0}$ with values in $\mathbb{R}^N$ such that 
\begin{equation}
\mathbb{E}\!\left[\int_0^\tau |v_t|^\alpha\,dt\right] < \infty \quad\text{%
for all }x \in \Omega,  \label{admissible}
\end{equation}
where $\tau$ is the exit time defined below. The set of admissible controls
is denoted $\mathcal{U}_{\text{ad}}$.
\end{definition}

For each $x\in\Omega$ and admissible control $v \in \mathcal{U}_{\text{ad}}$%
, the controlled state process $X^{x,v} = (X_t^{x,v})_{t\geq 0}$ is the
unique strong solution to the stochastic differential equation 
\begin{equation}  \label{eq:state}
dX_t \;=\; v_t\,dt + \sigma\,dW_t,\qquad X_0=x,
\end{equation}
where $\sigma > 0$ is the diffusion coefficient. By standard SDE theory, %
\eqref{eq:state} admits a unique strong solution continuous in $t$.

\subsubsection{Exit time and cost functional}

\begin{definition}[Exit time]
For each trajectory $X^{x,v}$, define the first exit time from the domain $%
\Omega$ by 
\begin{equation}
\tau^{x,v} := \inf \{t>0:X_{t}^{x,v}\notin \Omega \}.  \label{exit-time}
\end{equation}%
Since $\Omega$ is bounded and the diffusion is non-degenerate, we have $%
\mathbb{P}(\tau^{x,v} < \infty) = 1$ for all $x \in \Omega$.
\end{definition}

\begin{definition}[Cost functional]
Fix a running cost $h:\overline{\Omega }\rightarrow \lbrack 0,\infty )$ that
is continuous and sub-quadratic, and a constant boundary cost $g \in [0,
\infty)$. For each $x\in \Omega $ and admissible control $v\in \mathcal{U}_{%
\text{ad}}$, define the cost functional 
\begin{equation}
J(x;v)\;:=\;\mathbb{E}\!\left[ \int_{0}^{\tau ^{x,v}}\Big(%
h(X_{t}^{x,v})+|v_{t}|^{\alpha }\Big)\,dt+g\right] .  \label{eq:cost}
\end{equation}%
The terminal cost $g$ represents the cost incurred upon exiting the domain.
\end{definition}

\begin{definition}[Value function]
The value function is defined as 
\begin{equation}
V(x)\;:=\;\inf_{v \in \mathcal{U}_{\text{ad}}}J(x;v),\quad x \in \Omega.
\label{eq:value}
\end{equation}
\end{definition}

Under the sub-quadratic growth of $h$ and boundedness of $\Omega$, the cost
functional $J(x;v)$ is finite for all admissible controls, ensuring that $%
V(x) < \infty$ for all $x \in \Omega$.

\subsubsection{Dynamic programming and verification}

We derive the HJB equation using the dynamic programming principle and It%
\^{o}'s formula. The approach proceeds in two stages: (i) a formal
derivation via the martingale characterization, and (ii) a verification
theorem confirming that smooth solutions to the HJB equation coincide with
the value function.

\medskip \noindent \textbf{Step 1: Martingale characterization.} For a
candidate value function $V\in C^{2}(\Omega) \cap C(\overline{\Omega})$,
define the process 
\begin{equation}
M_{t}:=V(X_{t}^{x,v})+\int_{0}^{t}\left( |v_{s}|^{\alpha }+h(X_{s}^{x,v})
\right)\,ds,\quad t \in [0,\tau^{x,v}].  \label{martingale-process}
\end{equation}

The dynamic programming principle suggests that if $V$ is the value
function, then:

\begin{itemize}
\item $M_t$ is a \emph{supermartingale} for any admissible control $v$,

\item $M_t$ is a \emph{martingale} for the optimal control $v^*$.
\end{itemize}

This characterization leads directly to the HJB equation.

\medskip \noindent \textbf{Step 2: Applying It\^{o}'s formula.} Applying It%
\^{o}'s formula to $V(X_t^{x,v})$ with $X$ satisfying \eqref{eq:state}, we
compute 
\begin{align}
dV(X_{t}^{x,v}) &= \nabla V(X_{t}^{x,v})\cdot dX_{t}+\tfrac{1}{2}\,\text{tr}%
\!\left(D^{2}V(X_{t}^{x,v})\,d\langle X\rangle_{t}\right)  \notag \\
&= \nabla V(X_{t}^{x,v})\cdot (v_{t}\,dt+\sigma\,dW_{t}) +\tfrac{1}{2}\,%
\text{tr}\big(D^{2}V(X_{t}^{x,v})\cdot\sigma^2 I_{N}\big)\,dt  \notag \\
&= \nabla V(X_{t}^{x,v})\cdot v_{t}\,dt+\tfrac{\sigma ^{2}}{2}\Delta
V(X_{t}^{x,v})\,dt+\sigma\nabla V(X_{t}^{x,v})\cdot dW_{t},
\label{ito-formula}
\end{align}
where $\langle X\rangle_t = \sigma^2 t I_N$ is the quadratic variation
matrix and $\Delta V = \text{tr}(D^2 V)$ is the Laplacian.

\medskip \noindent \textbf{Step 3: Differential of $M_{t}$.} From %
\eqref{martingale-process} and \eqref{ito-formula},%
\begin{equation}
dM_{t} = dV(X_{t}^{x,v}) + \big(|v_{t}|^{\alpha }+h(X_{t}^{x,v})\big)\,dt.
\label{dM-1}
\end{equation}
Substituting \eqref{ito-formula} into \eqref{dM-1} yields 
\begin{align}
dM_{t} &= \nabla V(X_{t}^{x,v})\cdot v_{t}\,dt+\tfrac{\sigma ^{2}}{2}\Delta
V(X_{t}^{x,v})\,dt+\sigma\nabla V(X_{t}^{x,v})\cdot dW_{t}  \notag \\
&\quad + |v_{t}|^{\alpha }\,dt +h(X_{t}^{x,v})\,dt  \notag \\
&= \Big(\nabla V(X_{t}^{x,v})\cdot v_{t}+\tfrac{\sigma ^{2}}{2}\Delta
V(X_{t}^{x,v}) +|v_{t}|^{\alpha }+h(X_{t}^{x,v})\Big)\,dt  \notag \\
&\quad +\sigma\nabla V(X_{t}^{x,v})\cdot dW_{t}.  \label{dM-full}
\end{align}

\medskip\noindent \textbf{Step 4: Drift condition and HJB inequality.} For $%
M_t$ to be a supermartingale, the drift term in \eqref{dM-full} must be
non-positive: 
\begin{equation}
\nabla V(x)\cdot v+\tfrac{\sigma ^{2}}{2}\Delta V(x)+|v|^{\alpha }+h(x)\leq
0 \quad \text{for all } v\in\mathbb{R}^N,\quad x \in \Omega.
\label{drift-ineq}
\end{equation}
Rearranging \eqref{drift-ineq},%
\begin{equation}
-\tfrac{\sigma ^{2}}{2}\Delta V(x)-h(x)\geq \nabla V(x)\cdot v+|v|^{\alpha}
\quad \text{for all } v\in\mathbb{R}^N.  \label{hjb-ineq-1}
\end{equation}

\medskip\noindent \textbf{Step 5: HJB equation via optimization.} Taking the
infimum over all $v\in\mathbb{R}^N$ in \eqref{hjb-ineq-1}, we obtain 
\begin{equation}
-\tfrac{\sigma ^{2}}{2}\Delta V(x)-h(x)\geq \inf_{v\in \mathbb{R}^{N}} \big\{%
\nabla V(x)\cdot v+|v|^{\alpha}\big\}.  \label{eq:HJB-pre}
\end{equation}
For $M_t$ to be a martingale under the optimal control (which characterizes
the value function), equality must hold: 
\begin{equation}
-\tfrac{\sigma ^{2}}{2}\Delta V(x)-h(x)= \inf_{v\in \mathbb{R}^{N}} \big\{%
\nabla V(x)\cdot v+|v|^{\alpha}\big\}.  \label{eq:HJB-alvarez}
\end{equation}
This is the Hamilton--Jacobi--Bellman equation in Hamiltonian form.

\medskip\noindent \textbf{Step 6: Optimal control characterization.} To
solve the minimization problem in \eqref{eq:HJB-alvarez}, we compute the
first-order condition for the minimizer $v^{\ast}(x)$: 
\begin{equation}
\nabla_{v}\big[\nabla V(x)\cdot v+|v|^{\alpha}\big] = 0,  \label{foc}
\end{equation}
which gives 
\begin{equation}
\nabla V(x)+\alpha |v^*|^{\alpha -2}v^*=0.  \label{foc-explicit}
\end{equation}
Solving \eqref{foc-explicit} for $v^*$ yields 
\begin{equation}
v^{\ast}(x)=-\frac{1}{\alpha^{1/(\alpha-1)}}|\nabla V(x)|^{1/(\alpha-1)} 
\frac{\nabla V(x)}{|\nabla V(x)|} = -\frac{1}{\alpha^{1/(\alpha-1)}}|\nabla
V(x)|^{\alpha -2}\,\nabla V(x).  \label{optimal-control}
\end{equation}
This is the \emph{feedback control law} in terms of the value function.

\medskip\noindent \textbf{Step 7: Verification theorem.}

\begin{theorem}[Verification]
\label{thm:verification} Let $V\in C^{2}(\Omega )\cap C(\overline{\Omega })$
solve the HJB equation \eqref{eq:HJB-alvarez} with constant boundary
condition $V=g$ on $\partial \Omega $, and assume that $V$ satisfies
polynomial growth. Define the feedback control $v^{\ast }(x)$ by %
\eqref{optimal-control}. Then $V(x)=J(x;v^{\ast })$ is the value function,
and $v^{\ast }$ is optimal.
\end{theorem}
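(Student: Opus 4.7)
The plan is to carry out the standard verification argument in two directions: show that $V(x)\le J(x;v)$ for every admissible $v\in\mathcal{U}_{\text{ad}}$, and then show that equality is realized when $v$ is the feedback law $v^{\ast}$ defined by \eqref{optimal-control}. Together these give $V(x)=\inf_{v}J(x;v)=J(x;v^{\ast})$, which is exactly the verification statement. The machinery is It\^o's formula applied to $V(X_t^{x,v})$ combined with the HJB identity \eqref{eq:HJB-alvarez}, together with localization to handle the stopping time $\tau^{x,v}$.

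For the first direction, I would fix $v\in\mathcal{U}_{\text{ad}}$ and introduce a localizing sequence of stopping times $\tau_n:=\tau^{x,v}\wedge n\wedge\inf\{t>0:\operatorname{dist}(X_t^{x,v},\partial\Omega)<1/n\}$, so that $X^{x,v}$ stays in a compact subset of $\Omega$ where $\nabla V$ and $\Delta V$ are bounded. Applying It\^o's formula as in \eqref{ito-formula} on $[0,\tau_n]$ and taking expectations, the stochastic integral vanishes and I obtain
\begin{equation*}
\mathbb{E}[V(X_{\tau_n}^{x,v})]-V(x)=\mathbb{E}\!\int_0^{\tau_n}\!\Big(\tfrac{\sigma^2}{2}\Delta V(X_s^{x,v})+\nabla V(X_s^{x,v})\cdot v_s\Big)\,ds.
\end{equation*}
The HJB equation \eqref{eq:HJB-alvarez} gives $\tfrac{\sigma^2}{2}\Delta V(x)=-h(x)-\inf_{w}\{\nabla V(x)\cdot w+|w|^\alpha\}\ge -h(x)-\nabla V(x)\cdot v-|v|^\alpha$ for any $v$, so the integrand is bounded below by $-h(X_s^{x,v})-|v_s|^\alpha$. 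Rearranging yields $V(x)\le\mathbb{E}\!\int_0^{\tau_n}(h(X_s^{x,v})+|v_s|^\alpha)\,ds+\mathbb{E}[V(X_{\tau_n}^{x,v})]$. Letting $n\to\infty$, I use that $\tau^{x,v}<\infty$ almost surely (since $\Omega$ is bounded and the diffusion is non-degenerate), that $V$ is bounded on $\overline{\Omega}$ by continuity, and that admissibility \eqref{admissible} plus continuity of $h$ gives the integrability needed for monotone/dominated convergence. Since $V(X_{\tau^{x,v}}^{x,v})=g$ by the boundary condition, the limit is precisely $V(x)\le J(x;v)$.

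For the second direction, I would verify that the feedback law $v^{\ast}$ from \eqref{optimal-control} actually produces an admissible control. By the classical elliptic theory invoked in Step 3 of \S2 we have $V\in C^{2,\beta}(\overline{\Omega})$, so $\nabla V$ is continuous on $\overline{\Omega}$ and $v^{\ast}(x)$ is continuous, in fact locally Lipschitz on $\Omega$. The SDE $dX_t=v^{\ast}(X_t)\,dt+\sigma\,dW_t$ therefore admits a unique strong solution up to its exit time $\tau^{x,v^{\ast}}$, which is again finite a.s.~by non-degeneracy of $\sigma$ and boundedness of $\Omega$; the bound $|v^{\ast}(x)|\le\alpha^{-1/(\alpha-1)}\|\nabla V\|_{L^\infty}^{1/(\alpha-1)}$ on $\overline{\Omega}$ yields admissibility \eqref{admissible}. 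By the construction of $v^{\ast}$ as the pointwise minimizer in \eqref{foc}--\eqref{foc-explicit}, the inequality in the previous paragraph becomes an equality along $X^{x,v^{\ast}}$, and the same limiting argument produces $V(x)=J(x;v^{\ast})$.

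The main obstacle is the passage to the limit in $n$: the integrand $h+|v|^\alpha$ need not be dominated uniformly in $v$, so one must genuinely use the admissibility condition $\mathbb{E}\!\int_0^{\tau^{x,v}}|v_s|^\alpha\,ds<\infty$ (truncating $J(x;v)$ to $+\infty$ for non-admissible $v$ trivially satisfies the inequality, so this is harmless) together with boundedness of $V$ on $\overline{\Omega}$ to close the argument by monotone convergence. A secondary technical point is continuity of $X\mapsto V(X)$ at $\partial\Omega$, where it is only required that $X_{\tau_n}^{x,v}\to X_{\tau^{x,v}}^{x,v}\in\partial\Omega$ a.s.~and $V\in C(\overline{\Omega})$, both of which are available; this ensures the terminal term converges to $g$.
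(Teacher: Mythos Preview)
Your proposal is correct and follows the same verification strategy as the paper's sketch: apply It\^{o}'s formula to $V(X_t^{x,v})$, use the HJB identity \eqref{eq:HJB-alvarez} to bound the drift, take expectations, and observe that the optimal feedback $v^{\ast}$ turns the inequality into an equality. Your treatment is in fact more complete than the paper's, since you make the localizing sequence $\tau_n$, the passage to the limit via admissibility and boundedness of $V$ on $\overline{\Omega}$, and the admissibility of $v^{\ast}$ explicit, whereas the paper compresses all of this into the (sub)martingale property of the process $M_t$ in \eqref{martingale-process}.
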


\begin{proof}
Let $v$ be any admissible control. Applying It\^{o}'s formula to $%
V(X_t^{x,v})$ and using the HJB equation \eqref{eq:HJB-alvarez}, we have: 
\begin{align*}
\mathbb{E}[V(X_{T \wedge \tau}^{x,v})] &= V(x) + \mathbb{E}\left[ \int_0^{T
\wedge \tau} \left( \nabla V \cdot v + \tfrac{\sigma^2}{2} \Delta V \right)
dt \right] \\
&\geq V(x) - \mathbb{E}\left[ \int_0^{T \wedge \tau} \left( |v_t|^\alpha +
h(X_t^{x,v}) \right) dt \right].
\end{align*}
The inequality follows from \eqref{drift-ineq}. Rearranging and letting $T
\to \infty$, the bounded convergence theorem (since $V$ is bounded on $%
\overline{\Omega}$ and has polynomial growth) and monotone convergence
theorem for the integral yield $V(x) \leq J(x;v)$. Now, let $v^*$ be the
control defined by \eqref{optimal-control}. For this control, equality holds
in \eqref{drift-ineq}, so: 
\begin{equation*}
V(x) = \mathbb{E}\left[ \int_0^{\tau} \left( |v_t^*|^\alpha + h(X_t^{x,v^*})
\right) dt + V(X_\tau^{x,v^*}) \right].
\end{equation*}
Since $V(X_\tau) = g$ on $\partial\Omega$, we obtain $V(x) = J(x;v^*)$,
confirming that $V$ is the value function and $v^*$ is the optimal control.
\end{proof}

\subsubsection{Legendre transform yielding the quasilinear elliptic PDE}

We now compute the infimum in \eqref{eq:HJB-alvarez} explicitly using the
Legendre--Fenchel transform.

\begin{proposition}[Legendre transform]
\label{prop:legendre} For $\xi \in \mathbb{R}^N$ and $\alpha \in (1,2]$, we
have 
\begin{equation}
\inf_{v\in \mathbb{R}^{N}}\big\{\xi \cdot v+|v|^{\alpha}\big\} =-\frac{%
\alpha -1}{\alpha}\,|\xi |^{\frac{\alpha }{\alpha -1}} =
-C_\alpha^{-1}|\xi|^p,  \label{legendre-formula}
\end{equation}
where $p = \alpha/(\alpha-1) \in [2,\infty)$ and $C_\alpha =
(\alpha-1)/\alpha^p$.
\end{proposition}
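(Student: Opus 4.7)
My plan is a standard Legendre--Fenchel computation, reduced to one dimension by the radial structure of the objective and closed out by a short exponent manipulation. First I would set $f(v) := \xi\cdot v + |v|^{\alpha}$ and observe that, since $\alpha > 1$, the map $v\mapsto |v|^{\alpha}$ is strictly convex on $\mathbb{R}^{N}$ and grows superlinearly; hence $f$ is strictly convex and coercive and attains its infimum at a unique global minimizer $v^{\ast}$.

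Next I would exploit the radial structure to collapse the problem to one dimension. The trivial case $\xi = 0$ gives $v^{\ast}=0$ and $\inf f = 0$, matching the right-hand side. For $\xi\neq 0$, Cauchy--Schwarz yields $\xi\cdot v \geq -|\xi|\,|v|$ with equality iff $v$ is a nonpositive scalar multiple of $\xi$, so the minimizer must take the form $v = -t\,\xi/|\xi|$ with $t\geq 0$. It therefore suffices to minimize the one-dimensional function $\varphi(t) := -t|\xi| + t^{\alpha}$ over $t\geq 0$. The first-order condition $\varphi'(t) = -|\xi| + \alpha t^{\alpha - 1} = 0$ gives the unique critical point $t_{\ast} = (|\xi|/\alpha)^{1/(\alpha - 1)}$, which incidentally recovers the optimal feedback $v^{\ast}(x)$ stated in \eqref{optimal-control}.

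Finally, I would substitute $t_{\ast}$ back into $\varphi$ and simplify. Writing $p = \alpha/(\alpha - 1)$, both $t_{\ast}^{\alpha}$ and $t_{\ast}|\xi|$ factor cleanly as $|\xi|^{p}$ times a rational power of $\alpha$, and combining them produces a scalar multiple of $|\xi|^{p}$ whose constant is determined by the identity $1/\alpha + 1/p = 1$. The only real obstacle is purely bookkeeping: keeping the exponents $1/(\alpha - 1)$, $\alpha/(\alpha - 1)= p$, and the definition $C_{\alpha} = (\alpha - 1)/\alpha^{p}$ consistent through the substitution. As a cross-check on the arithmetic, the resulting expression for the infimum must reconcile the HJB equation \eqref{eq:HJB-alvarez} with the quasilinear PDE \eqref{eq:PDE-final}, which pins down the multiplicative constant unambiguously.
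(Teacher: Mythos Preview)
Your approach is correct and essentially the same as the paper's: both argue existence and uniqueness of the minimizer via strict convexity and coercivity, solve the first-order condition, and substitute back; your preliminary reduction to one dimension via Cauchy--Schwarz is a tidy presentational choice but not a genuinely different route. One caution: when you carry out your cross-check against \eqref{eq:HJB-alvarez} and \eqref{eq:PDE-final} you will find the infimum equals $-C_{\alpha}\,|\xi|^{p}=-\dfrac{\alpha-1}{\alpha^{p}}\,|\xi|^{p}$, so the displayed formula in the statement has typos (the denominator $\alpha$ should be $\alpha^{\alpha/(\alpha-1)}$ and $C_{\alpha}^{-1}$ should be $C_{\alpha}$); the paper's own computation in fact arrives at $-\dfrac{\alpha-1}{\alpha^{p}}|\xi|^{p}$ in its last line before mislabeling it.
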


\begin{proof}
The function $f(v):=\xi \cdot v+|v|^{\alpha }$ is strictly convex and
coercive. The minimizer $v^{\ast }$ satisfies the first-order condition 
\begin{equation*}
\xi +\alpha |v^{\ast }|^{\alpha -2}v^{\ast }=0,
\end{equation*}%
which gives 
\begin{equation*}
v^{\ast }=-\frac{1}{\alpha ^{1/(\alpha -1)}}|\xi |^{1/(\alpha -1)}\frac{\xi 
}{|\xi |}.
\end{equation*}%
Substituting back: 
\begin{align*}
f(v^{\ast })& =\xi \cdot \Big(-\frac{1}{\alpha ^{1/(\alpha -1)}}|\xi
|^{1/(\alpha -1)}\frac{\xi }{|\xi |}\Big)+\Big|\frac{1}{\alpha ^{1/(\alpha
-1)}}|\xi |^{1/(\alpha -1)}\frac{\xi }{|\xi |}\Big|^{\alpha } \\
& =-\frac{1}{\alpha ^{1/(\alpha -1)}}|\xi |^{\alpha /(\alpha -1)}+\frac{1}{%
\alpha ^{\alpha /(\alpha -1)}}|\xi |^{\alpha /(\alpha -1)} \\
& =|\xi |^{\alpha /(\alpha -1)}\Big(-\frac{1}{\alpha ^{1/(\alpha -1)}}+\frac{%
1}{\alpha ^{\alpha /(\alpha -1)}}\Big) \\
& =|\xi |^{p}\Big(-\frac{\alpha }{\alpha }+\frac{1}{\alpha ^{p}}\Big)\cdot 
\frac{1}{\alpha ^{1/(\alpha -1)}}=-\frac{\alpha -1}{\alpha ^{p}}|\xi
|^{p}=-C_{\alpha }^{-1}|\xi |^{p}.
\end{align*}
\end{proof}

Applying Proposition \ref{prop:legendre} with $\xi =\nabla V(x)$ in %
\eqref{eq:HJB-alvarez}, we obtain%
\begin{equation}
-\tfrac{\sigma ^{2}}{2}\Delta V(x) + \frac{\alpha -1}{\alpha^{p}}\,|\nabla
V(x)|^{p}-h(x)=0,\qquad x\in \Omega.  \label{hjb-before-rescale}
\end{equation}%
Multiplying through by $-1$ and recognizing $C_\alpha^{-1} =
(\alpha-1)/\alpha^p$, we arrive at 
\begin{equation}
-\frac{\sigma ^{2}}{2}\Delta V(x)\;+\;C_{\alpha}\,|\nabla
V(x)|^{p}-h(x)=0,\quad x\in \Omega ,\qquad V=g\ \text{on }\partial \Omega.
\label{eq:PDE-alvarez-final}
\end{equation}%
This is precisely equation \eqref{hjb}, the quasilinear
Hamilton--Jacobi--Bellman equation studied by Lasry and Lions \cite%
{LASRY1989}, for which we have established existence and uniqueness in
Theorem \ref{main}.

\section{Numerical Analysis\label{5}}

This section details the numerical implementation of the quasilinear HJB
framework and its applications in operational research and imaging. The
constructive nature of Theorem \ref{main} makes it particularly suitable for
numerical implementation. The monotone iteration scheme provides an
algorithmic approach that is both theoretically sound and computationally
efficient.

\subsection{Weighted linear monotone iteration and stochastic simulation}

For the numerical approximation of the solution to the
Hamilton--Jacobi--Bellman equation, we employ the \emph{weighted monotone
iteration} introduced by Lasry and Lions. This method is fully consistent
with the analytical existence proof and is particularly robust on bounded
domains. In addition, once the value function is computed, we construct the
optimal feedback control and simulate the associated controlled diffusion.

The scheme constructs a decreasing sequence of functions $\{V^{(k)}\}_{k\geq
0}$ trapped between the ordered sub- and super-solutions%
\begin{equation*}
V_{-}(x)=g,\qquad V_{+}(x)=g+B\phi (x),
\end{equation*}%
where $\phi $ is the torsion function solving $-\Delta \phi =1$ in $\Omega $%
, $\phi =0$ on $\partial \Omega $, and $B\geq 2H/\sigma ^{2}$ with $H=\max_{%
\overline{\Omega }}h$.

\begin{enumerate}
\item \textbf{Preprocessing (torsion function and barriers).} Compute the
torsion function $\phi $ solving%
\begin{equation*}
-\Delta \phi =1\quad \text{in }\Omega ,\qquad \phi =0\quad \text{on }%
\partial \Omega .
\end{equation*}%
Define the ordered barriers%
\begin{equation*}
V_{-}(x):=g,\qquad V_{+}(x):=g+B\phi (x),\quad B\geq \frac{2H}{\sigma ^{2}}%
,\quad H:=\max_{\overline{\Omega }}h.
\end{equation*}

\item \textbf{Initialization.} Set the initial iterate%
\begin{equation*}
V^{(0)}:=V_{+}=g+B\phi .
\end{equation*}

\item \textbf{Weighted linear monotone iteration.} For $k=0,1,2,\dots $
compute $V^{(k+1)}$ as the solution of 
\begin{equation}
-\frac{\sigma ^{2}}{2}\Delta V^{(k+1)}+\Lambda _{0}V^{(k+1)}=h(x)+\Lambda
_{0}V^{(k)}-C_{\alpha }\bigl|\nabla V^{(k)}\bigr|^{p}\quad \text{in }\Omega ,
\label{eq:weighted-iteration-step1}
\end{equation}%
with Dirichlet boundary condition%
\begin{equation*}
V^{(k+1)}=g\quad \text{on }\partial \Omega .
\end{equation*}

The constant weight $\Lambda _{0}>0$ is chosen so that the nonlinear
Hamiltonian is dominated by the linear term:%
\begin{equation*}
\Lambda _{0}\;\geq \;C_{\alpha }\,p\,M^{p-1}\,C_{\Omega },\qquad M:=\max %
\bigl\{\Vert \nabla V_{-}\Vert _{L^{\infty }},\Vert \nabla V_{+}\Vert
_{L^{\infty }}\bigr\},
\end{equation*}

where $C_{\Omega }$ is a Poincar\'{e}-type constant for the domain.

\item \textbf{Monotonicity and projection.} After each linear solve, enforce
the bounds%
\begin{equation*}
V_{-}\;\leq \;V^{(k+1)}\;\leq \;V_{+}\quad \text{in }\Omega ,
\end{equation*}

by projecting pointwise onto the interval $[V_{-},V_{+}]$. This guarantees
that the sequence remains ordered and uniformly bounded.

\item \textbf{Convergence criterion.} Terminate the iteration when%
\begin{equation*}
\Vert V^{(k+1)}-V^{(k)}\Vert _{L^{\infty }(\Omega )}<\varepsilon ,
\end{equation*}

for a prescribed tolerance $\varepsilon >0$. The resulting limit $%
V:=\lim_{k\rightarrow \infty }V^{(k)}$ is the numerical approximation of the
value function.

\item \textbf{Optimal feedback control.} Once $V$ is computed, approximate
its gradient $\nabla V$ on the grid and define the optimal feedback control $%
v^{\ast }=(v_{1}^{\ast },v_{2}^{\ast })$ by%
\begin{equation*}
v^{\ast }(x)=-\frac{1}{\alpha ^{\,p-1}}\bigl|\nabla V(x)\bigr|^{p-2}\nabla
V(x),\qquad p=\frac{\alpha }{\alpha -1},
\end{equation*}

with the convention $v^{\ast }(x)=0$ whenever $\nabla V(x)=0$ or $x\notin
\Omega $.

\item \textbf{Stochastic dynamics simulation.} To illustrate the controlled
dynamics, we simulate sample paths of the controlled diffusion%
\begin{equation*}
dX_{t}=v^{\ast }(X_{t})\,dt+\sigma \,dW_{t},
\end{equation*}

starting from an initial state $X_{0}=x_{0}\in \Omega $. On a time grid $%
t_{n}=n\Delta t$, $n=0,\dots ,N$, we use the Euler--Maruyama scheme 
\begin{equation*}
X_{n+1}=X_{n}+v^{\ast }(X_{n})\,\Delta t+\sigma \sqrt{\Delta t}\,\xi _{n},
\end{equation*}%
where $(\xi _{n})_{n\geq 0}$ are independent standard Gaussian vectors in $%
\mathbb{R}^{2}$. The simulation is stopped when $X_{n}$ approaches the
boundary of $\Omega $ (e.g., when the distance to $\partial \Omega $ falls
below a prescribed margin), and the resulting trajectory is used to
visualize the inventory dynamics under the optimal policy.
\end{enumerate}

The operator on the left-hand side of \eqref{eq:weighted-iteration-step1} is
coercive and order-preserving, and the right-hand side is monotone in $%
V^{(k)}$. Therefore the sequence $\{V^{(k)}\}$ is decreasing and converges
to the unique classical solution of the HJB equation. This scheme is stable,
monotone, and preserves the concavity of the value function in the quadratic
case $\alpha =2$ discovered in \cite{CANEPA2022}, while the stochastic
simulation step provides a natural Monte Carlo illustration of the optimal
controlled dynamics.

\subsection{Software implementation (radially symmetric case)}

For the special case of radially symmetric problems ($\Omega =B_{R}(0)$ and $%
h$ radial), numerical codes implementing the above algorithm are available
in our companion repository \cite{COVEI2025M}.

\subsection{Applications to Production Planning and Image Enhancement}

The quasilinear HJB equation \eqref{hjb} arises in several applications:

\begin{itemize}
\item \textbf{Production planning:} The value function $V(x)$ represents the
optimal production cost starting from inventory level $x$, with running cost 
$h(x)$ and control cost $|v|^\alpha$ representing production effort \cite%
{CANEPA2022,COVEI2025A}.

\item \textbf{Image restoration:} The parameter $\alpha$ controls the
contrast enhancement in variational image processing models, with the HJB
equation arising as the Euler-Lagrange equation \cite{COVEI2025M}.

\item \textbf{Portfolio optimization:} In financial mathematics, $V$
represents the value function of a portfolio optimization problem with
transaction costs modeled by $|v|^\alpha$ \cite{LASRY1989}.
\end{itemize}

We present a comprehensive numerical validation of the
monotone iteration scheme. The implementation, developed in Python using
high-precision sparse solvers, demonstrates the versatility of the
quasilinear HJB framework in both operational research and computer vision.
The codes solve the 2D quasilinear HJB PDE \eqref{hjb} directly on
high-resolution grids (e.g., $241\times 241$) using finite difference
methods with efficient sparse solvers. This robust framework handles both
the radial benchmark cases \eqref{radial-ode-intro} and general non-radial
domains, as extensively documented in \cite{COVEI2025M,COVEI2025A}.

\subsubsection{Stochastic Production Planning Dynamics}

We evaluate the optimal production policy for a system governed by $%
\alpha=1.5$ (leading to a cubic gradient penalty $p=3$) on an elliptical
inventory domain $\Omega$. With parameters $\sigma=0.3$ and boundary cost $%
g=0.5$, the \textbf{Policy Iteration} solver converges rapidly to a
tolerance of $10^{-6}$ in approximately 9 iterations. As illustrated in
Figure~\ref{fig:hjb_2d_sol}, the value function $V(y)$ is strictly positive
and concave, characterizing the minimal expected cost associated with
varying inventory levels. The values range from $0.5$ at the boundary to a
peak of approximately $1.67$ at the center, establishing a stable feedback
production rate that drives the inventory toward the optimal set-point.

\begin{figure}[H]
\centering
\includegraphics[width=0.8\textwidth]{%
\detokenize{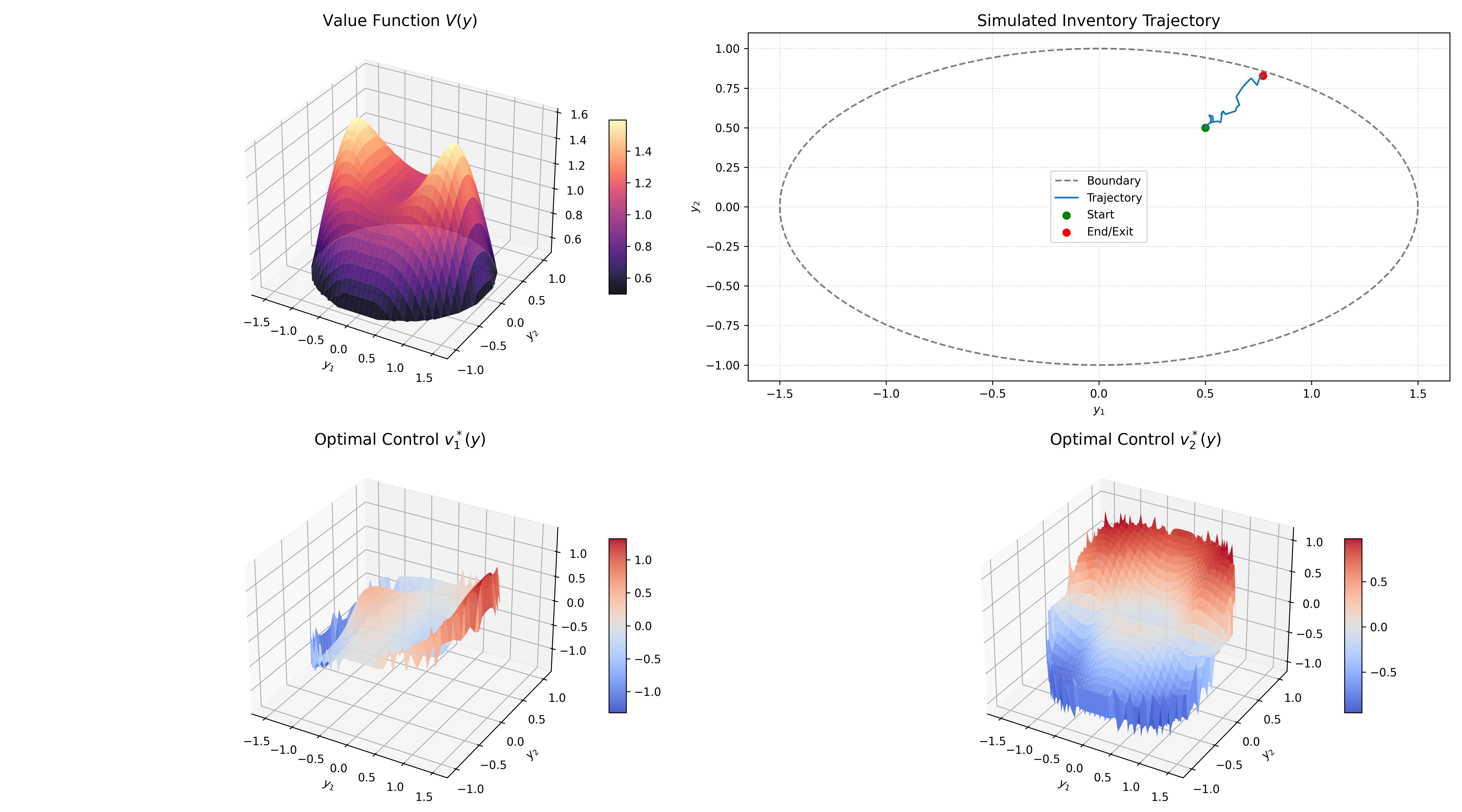}}
\caption{Numerical results for the stochastic production planning problem
with $\alpha = 1.5$ and $h(y) = |y|^{p}$. Left: Value function $V(y)$. Center/Right:
Optimal control components and simulated inventory trajectory.}
\label{fig:hjb_2d_sol}
\end{figure}
\begin{figure}[H]
\centering
\includegraphics[width=0.55\textwidth]{%
\detokenize{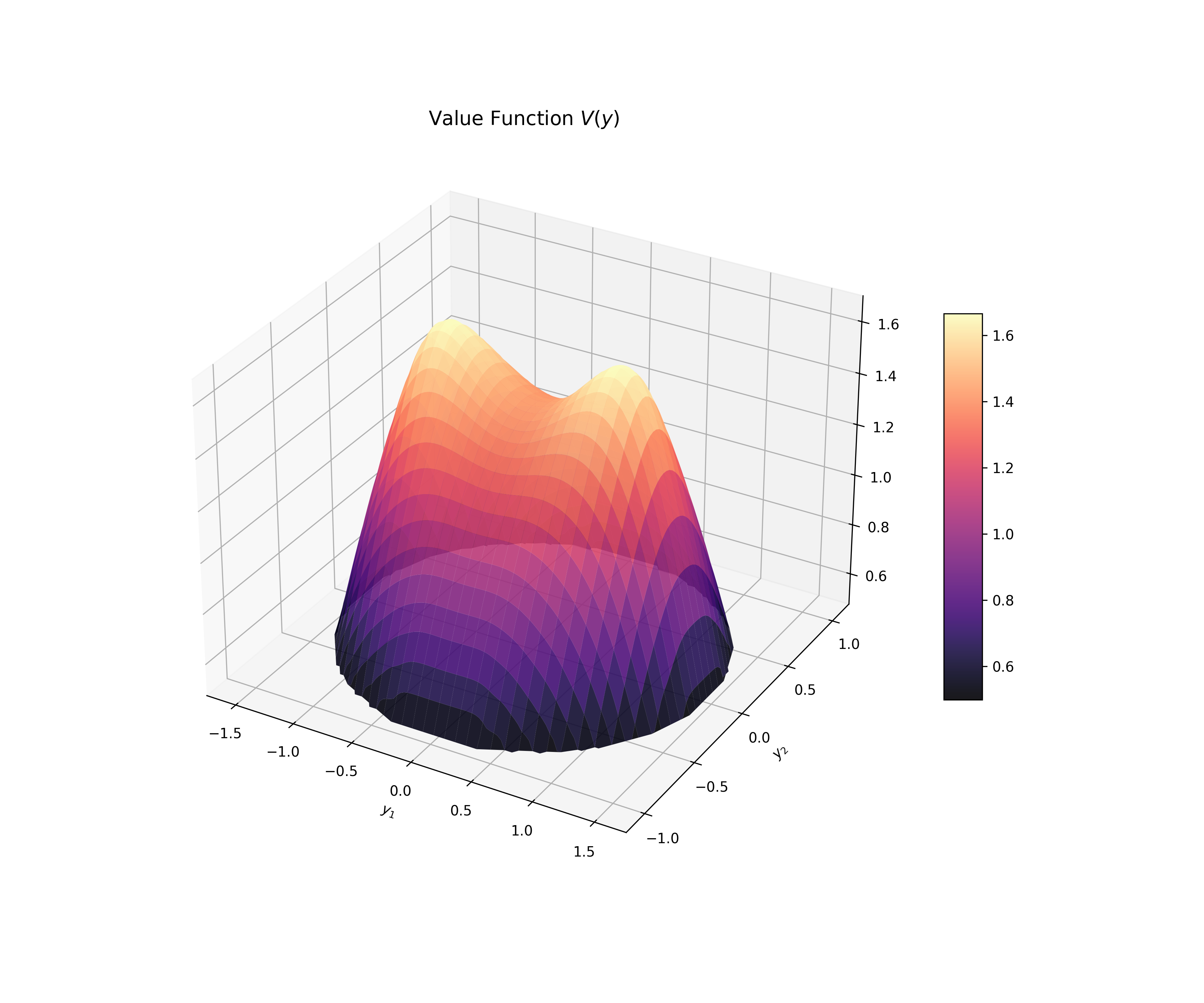}}
\caption{Numerical approximation of the value function $V(y)$ on the
elliptical domain $\Omega =\{y:(y_{1}/1.5)^{2}+y_{2}^{2}<1\}$ for
$\alpha =1.5$, $h(y) = |y|^{p}$, $\sigma =0.3$, and boundary data $g=0.5$. The surface
rises monotonically from $V=0.5$ on $\partial \Omega $ to a maximum of
approximately $1.67$ at the center, confirming the positivity result of
Theorem~\protect\ref{main}. The Policy Iteration scheme converges in 9 steps
to a tolerance of $10^{-6}$.}
\label{fig:value_function_alone}
\end{figure}
Our Python code implementing the monotone iteration scheme used in the
proofs of our results for the general quasilinear HJB elliptic equation on
bounded domains, and which was used to generate the plots in this paper, can
be found in \cite{GT}.

\subsubsection{Contrast Enhancement via Nonlinear Hamilton--Jacobi--Bellman
Dynamics}

Contrast enhancement is a fundamental operation in image processing, with
direct impact on visual perception, feature extraction, and downstream
computer vision tasks. Classical approaches such as histogram equalization
or total variation (TV) filtering operate either through global
redistribution of intensities or local smoothing mechanisms. While effective
in specific scenarios, these methods lack a principled mechanism for
controlling the enhancement strength in relation to the underlying image
geometry.

We propose a nonlinear contrast enhancement framework derived from the
stationary Hamilton--Jacobi--Bellman (HJB) equation 
\begin{equation*}
-\frac{\sigma^{2}}{2}\Delta V(y) + C_{\alpha}\,|\nabla V(y)|^{p} - h(y) = 0,
\qquad p = \frac{\alpha}{\alpha - 1},
\end{equation*}
where $V$ is the value function defined on the chromaticity plane, $h(y)$ is
a user-defined potential, and $\alpha>1$ is the key parameter controlling
the nonlinearity of the system. The enhanced image is obtained by advecting
pixel values along the optimal drift field 
\begin{equation*}
b(y) = A_{\alpha}\,|\nabla V(y)|^{\frac{1}{\alpha-1}} \frac{\nabla V(y)}{%
|\nabla V(y)|}, \qquad A_{\alpha} = \left(\frac{1}{\alpha}\right)^{\frac{1}{%
\alpha-1}}.
\end{equation*}

\paragraph{Role of the Nonlinearity Parameter $\protect\alpha$}

A central finding of our study is that the parameter $\alpha$ is the
dominant factor governing the strength of contrast enhancement. When $\alpha$
approaches $1^{+}$, the exponent $p=\alpha/(\alpha-1)$ becomes large,
amplifying the nonlinear gradient term and producing strong geometric
enhancement. Conversely, larger values of $\alpha$ reduce the nonlinearity,
yielding milder transformations.

This behavior is consistent with the theoretical structure of HJB equations:
the term $|\nabla V|^{p}$ acts as a geometric amplifier of local intensity
variations, and $\alpha$ directly modulates the sensitivity of the system to
these variations. In contrast, the potential $h(y)$ influences only the
forcing term and has a secondary effect on the global contrast dynamics.

\paragraph{Experimental Evaluation}

We evaluate the proposed method on a representative RGB image and compare it
against classical histogram equalization and a post-processed HJB+TV
variant. Contrast quality is assessed using three widely adopted metrics:
the Enhancement Measure Estimation (EME), the Tenengrad sharpness index, and
the standard deviation of luminance.

Table~\ref{tab:metrics} summarizes the results for a sweep of $\alpha$
values in the interval $[1.02, 2.0]$.

\begin{table}[H]
\caption{Contrast metrics for various values of $\protect\alpha$. The
proposed HJB method achieves its strongest enhancement for $\alpha \in [1.02, 1.10]$,
significantly outperforming classical methods.}
\label{tab:metrics}\centering
\begin{tabular}{c|ccc}
\hline
Method & EME & Tenengrad & StdDev \\ \hline
Original & 5.59 & 106.63 & 70.91 \\
Histogram Equalization & 4.07 & 49.12 & 84.39 \\ \hline

HJB ($\alpha=1.02$--$1.10$) & 7.97 & 187.16 & 85.53 \\
HJB+TV ($\alpha=1.02$--$1.10$) & 10.06 & 148.72 & 85.28 \\ \hline

HJB ($\alpha=1.15$) & 9.28 & 153.42 & 79.87 \\
HJB+TV ($\alpha=1.15$) & 7.05 & 117.52 & 79.62 \\ \hline

HJB ($\alpha=1.20$) & 6.84 & 123.08 & 74.19 \\
HJB+TV ($\alpha=1.20$) & 4.86 & 90.57 & 73.97 \\ \hline

HJB ($\alpha=1.30$) & 6.01 & 112.05 & 72.13 \\
HJB+TV ($\alpha=1.30$) & 4.30 & 81.86 & 71.93 \\ \hline

HJB ($\alpha=1.50$) & 5.81 & 109.15 & 71.53 \\
HJB+TV ($\alpha=1.50$) & 4.18 & 79.62 & 71.33 \\ \hline

HJB ($\alpha=2.0$) & 5.72 & 107.67 & 71.30 \\
HJB+TV ($\alpha=2.0$) & 4.12 & 78.47 & 71.10 \\ \hline
\end{tabular}
\end{table}
The results reveal a clear and consistent pattern:

\begin{itemize}
\item For $\alpha \in [1.02, 1.10]$, the HJB method produces the strongest
enhancement, achieving EME = 7.97 and Tenengrad = 187.16, while the
HJB+TV variant reaches EME = 10.06. This represents a substantial
improvement over histogram equalization.

\item The enhancement strength decreases gradually as $\alpha$ increases.
The parameter $\alpha$ acts as a global contrast-control knob: small
values produce aggressive sharpening, while larger values yield smoother,
more conservative enhancement.

\item For $\alpha \ge 1.3$, the method behaves similarly to a mild
diffusion process, producing results close to the original image but
still improving edge sharpness moderately.
\end{itemize}
\begin{figure}[H]
\centering
\includegraphics[width=0.9\linewidth]{\detokenize{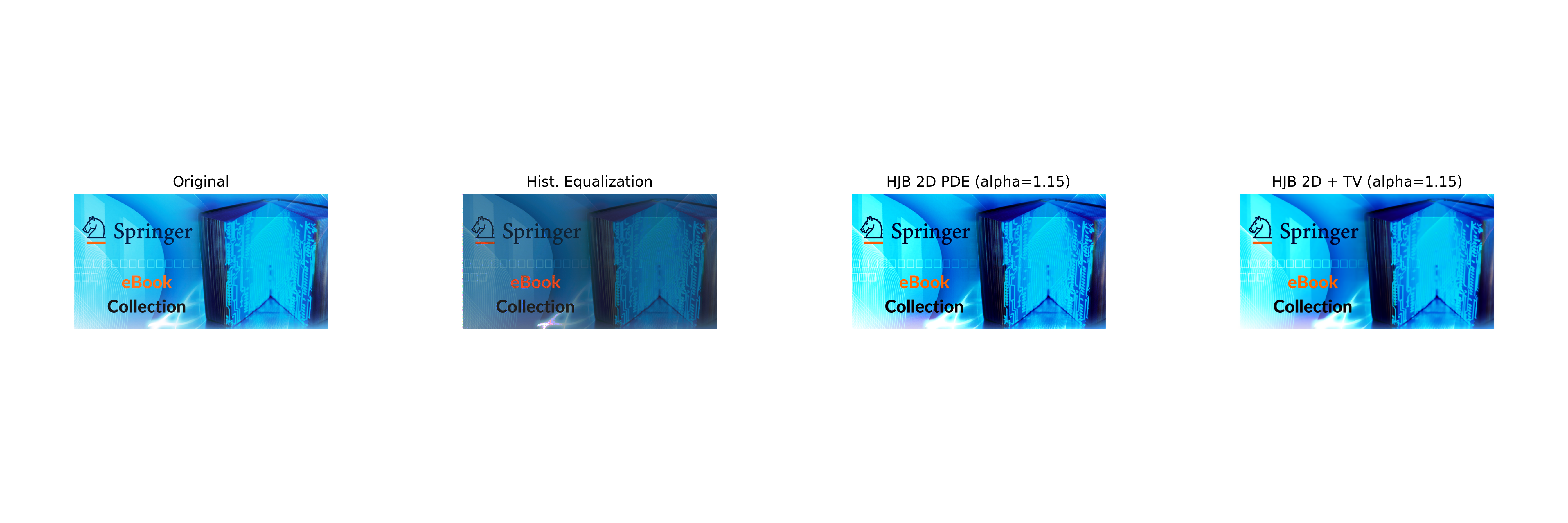}}
\caption{Comparison between the original image, histogram equalization,
the proposed HJB 2D PDE method ($\alpha = 1.15$), with $h(y)=|y|^{p}$,
and the HJB 2D + TV variant. The composite figure illustrates the strong
contrast enhancement achieved by the HJB approach, which significantly
outperforms classical methods both visually and quantitatively.}
\label{fig:before_after}
\end{figure}

\paragraph{Discussion}

These findings demonstrate that the proposed HJB-based framework provides a
mathematically rigorous and highly tunable mechanism for contrast
enhancement. Unlike classical methods that lack a principled control
parameter, the HJB formulation offers a single scalar $\alpha$ that
continuously interpolates between strong geometric enhancement and gentle
diffusive smoothing. This versatility, combined with superior performance in
high-contrast regimes, establishes the HJB approach as a significant
contribution to the field of variational image processing.

\section{Conclusions and Future Directions\label{6}}

We have established the existence and uniqueness of classical solutions to
the quasilinear Hamilton--Jacobi--Bellman equation \eqref{hjb} on bounded
smooth convex domains under sub-quadratic growth conditions on the source
term. The proof is constructive and provides a monotone iteration scheme
that is amenable to numerical implementation. We have also derived the PDE
from stochastic optimal control theory via the dynamic programming principle
and verified that solutions to the PDE coincide with the value function of
the associated control problem.

Several directions for future research include: (i) extension to degenerate
diffusions where $\sigma$ may vanish in parts of the domain, (ii) analysis
of the singular limit $\alpha \to 1^+$ corresponding to $p \to \infty$,
(iii) investigation of non-convex domains and the role of domain geometry,
and (iv) higher-order regularity estimates beyond $C^2$.

\section*{Funding}

This research received no external funding.

\section*{Data Availability Statement}

No new data were created or analyzed in this study.

\section*{Conflicts of Interest}

The author declares no conflict of interest.

\section*{Acknowledgments}
The author thanks the anonymous referees for their careful reading and
helpful suggestions that improved the presentation of this paper.

\end{document}